\newtheorem{theorem}{Theorem}[section]
\newtheorem*{theor}{Main Theorem}
\newtheorem{proposition}[theorem]{Proposition}
\newtheorem{lemma}[theorem]{Lemma}
\newtheorem{corollary}[theorem]{Corollary}
\newtheorem*{corol}{Corollary A}
\newtheorem*{corol2}{Corollary B}
\newtheorem*{KCfields}{Direct finiteness conjecture - fields}
\newtheorem*{KCdrings}{Direct finiteness conjecture - division rings}
\newtheorem*{KCnrings}{Direct finiteness conjecture - noetherian rings}
\theoremstyle{definition}
\newtheorem{question}[theorem]{Question}
\newtheorem{remark}[theorem]{Remark}
\newcommand{\K}{\mathbb{K}}
\newcommand{\R}{\mathbb R}
\newcommand{\N}{\mathbb N}
\newcommand{\F}{\mathbb F}
\newcommand{\Z}{\mathbb Z}
\def\Aut{\mathrm{Aut}}
\def\Cay{\mathrm{Cay}}
\def\Mat{\mathrm{Mat}}
\newcommand{\id}{\text{id}}
\newcommand{\Sp}{\text{Sp}}
\newcommand{\SL}{\text{SL}}
\newcommand{\PSL}{\text{PSL}}
\newcommand{\I}{\mathrm{I}}
\newcommand{\df}{\mathrm{def}}
\def\Ddots{\mathinner{\mkern1mu\raise\p@
\vbox{\kern7\p@\hbox{.}}\mkern2mu
\raise4\p@\hbox{.}\mkern2mu\raise7\p@\hbox{.}\mkern1mu}}
\title{Groups satisfying Kaplansky's stable finiteness conjecture}
\author{Federico Berlai}
\address{Universit\"{a}t Wien, Fakult\"{a}t f\"{u}r Mathematik, Oskar-Morgenstern-Platz 1, 1090 Wien, Austria.}
\email{federico.berlai@univie.ac.at}
\thanks{The author is supported by the European Research Council (ERC) grant of Prof. Goulnara Arzhantseva, grant agreement no.~259527.}
\keywords{Kaplansky's stable finiteness conjecture, sofic group, Deligne's group, space of marked groups}
\subjclass[2010]{20C07, 20E06, 20E26, 20F65}
\date{}
\begin{document}

\begin{abstract}
We prove that every \{finitely generated residually finite\}-by-sofic group satisfies Kaplansky's direct and stable finiteness conjectures with respect
to all noetherian rings. 

We use this result to provide countably many new examples of finitely presented non-LEA groups, for which soficity is still undecided, satisfying these two conjectures.
Deligne's famous example $\widetilde{\Sp_{2n}(\Z)}$ of a non residually finite group is among our examples, along with the families of amalgamated free products $\SL_n(\Z[1/p])\ast_{\F_r}\SL_n(\Z[1/p])$
and HNN extensions $\SL_n(\Z[1/p])\ast_{\F_r}$, where $p>2$ is a prime, $n\geq 3$ and $\F_r$ is a free group of rank $r$, for all $r\geq 2$.
\end{abstract}
\maketitle

\section{Introduction}
\noindent
In \cite[pp. 122-123]{Kap} Irving Kaplansky posed what nowadays is known as \emph{Kaplansky's direct finiteness conjecture}:

\begin{KCfields}
Given a field $\K$ and a group $G$, the group ring $\K[G]$ is directly finite. That is to say, if $x,y\in \K[G]$ are such that $xy=1$, then $yx=1$.
\end{KCfields}
One could look at the matrix rings $\Mat_{n\times n}(\K[G])$ and ask whether or not these rings are directly finite for all $n\in\N$. This is known as \emph{Kaplansky's stable finiteness conjecture}.
Although it might look stronger than the direct finiteness conjecture, they are in fact equivalent \cite{Vir}.
Hence in this paper, for the sake of simplicity, we restrict our arguments to direct finiteness.

Kaplansky himself proved that, given a field $\K$ of characteristic zero and a group $G$, the group ring $\K[G]$ is directly finite.
Since then progress has been done, but the conjecture is still unresolved. In \cite{AOmP}, Ara, O'Meara and Perera proved that $D[G]$ is directly finite whenever $G$ is a residually amenable
group and $D$ is a division ring. Later Elek and Szab\'{o} generalized this result to the wider class of all sofic groups \cite[Corollary 4.7]{ElSz04}.

Sofic groups were introduced in 1999 by Gromov, in an attempt to solve Gottschalk's conjecture in topological dynamics \cite{Gro}. 
The class of sofic groups is far from being completely understood, and it still puzzles the experts.
In particular, it is not yet known if all groups are sofic. 

Soficity is stable under many group-theoretic operations \cite{CaLu,CSC,Pes08}. At the same time, it is still unclear how this notion behaves under taking group extensions:
While it is known that a sofic-by-amenable group is again sofic \cite[Proposition 7.5.14]{CSC}, it is still an open problem whether or not finite-by-sofic, 
free-by-sofic or sofic-by-sofic groups, among others, are again sofic groups. 

Here we use the standard notation of an $\mathcal{A}$-by-$\mathcal{B}$ group to denote a group $G$ with a normal subgroup $N\trianglelefteq G$ such that $N\in\mathcal{A}$ and $G/N\in\mathcal{B}$, where
$\mathcal{A}$ and $\mathcal{B}$ are given classes of groups (e.g. $\mathcal{A}$ being the free groups and $\mathcal{B}$ being the sofic groups, in the case of a free-by-sofic group).

Since there is yet no known example of a group which fails to be sofic, the original Kaplansky's conjecture and the following variant are still open problems:
\begin{KCdrings}
Given a division ring $D$ and a group $G$, the group ring $D[G]$ is directly finite.
\end{KCdrings}

A major recent breakthrough has been made by Virili \cite{Vir}. He proved that any crossed product $N\ast G$ is directly finite whenever $N$ is a left-noetherian ring (respectively: right-noetherian ring)
and $G$ is a sofic group (see Section \ref{prof} for the definition and \cite{Pas,Vir} for more details about crossed products).

Group rings are basic examples of crossed products, hence we state another generalization of the original conjecture:
\begin{KCnrings}
Given a noetherian\footnote{The main concern of this work are groups, hence in what follows we focus on noetherian rings, that is, rings that are both left- and right-noetherian, 
rather than specifying if the ring is left- or right-noetherian. All our statements remain true when one restricts to left-noetherian, or right-noetherian, rings.} 
ring $N$ and a group $G$, the group ring $N[G]$ is directly finite.
\end{KCnrings}

As a consequence of his general result on crossed products, 
Virili deduced that the group ring $N[G]$ of a \{polycyclic-by-finite\}-by-sofic group $G$ is directly finite with respect to all noetherian rings $N$ \cite[Corollary 5.4]{Vir}, 
and that the group ring $D[G]$ 
of a free-by-sofic group $G$ is directly finite with respect to all division rings $D$ \cite[Corollary 5.5]{Vir}. As mentioned above, the interest in these classes of groups arises because they are
not known to be sofic.

The aim of this paper is to prove the following Theorem, which establishes Kaplansky's direct and stable finiteness conjectures for the group ring of many groups that are not known to be sofic.

\begin{theor}\label{Kap RF-by-sofic}
Let $N$ be a noetherian ring and $G$ be a \{finitely generated residually finite\}-by-sofic group. Then $N[G]$ is directly finite and, equivalently, stably finite.
\end{theor}

As a corollary, we partially extend \cite[Corollary 5.5]{Vir} from division rings to noetherian rings. 
\begin{corol}
Let $N$ be a noetherian ring and $G$ be a \{finitely generated free\}-by-sofic group. Then $N[G]$ is directly finite and, equivalently, stably finite.
\end{corol}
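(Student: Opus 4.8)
The plan is to derive Corollary A directly from the Main Theorem by showing that the hypothesis of the Corollary is a special case of the hypothesis of the Theorem. Both statements have the same conclusion (directly finite, equivalently stably finite, for $N[G]$ with $N$ noetherian), so the entire content is the implication between the two group-theoretic hypotheses.

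First I would recall the structural assumption: $G$ is a \{finitely generated free\}-by-sofic group, meaning there is a normal subgroup $N \trianglelefteq G$ that is a finitely generated free group, with $G/N$ sofic. The key observation is that a finitely generated free group is residually finite; this is a classical fact (free groups are residually finite, being residually $p$-groups for every prime $p$, and finite generation is already part of the hypothesis). Hence the normal subgroup $N$ lies in the class of finitely generated residually finite groups, which means $G$ is in fact a \{finitely generated residually finite\}-by-sofic group. The Main Theorem then applies verbatim and yields the conclusion.

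The step requiring the most care is simply verifying that the class inclusion is correct at the level of the ``by'' construction, namely that every finitely generated free group is both finitely generated (immediate) and residually finite. I would state this cleanly: if $F$ is free on a finite generating set, then $F$ is residually finite, so $F$ belongs to the class of finitely generated residually finite groups. Once this is in place, no further argument is needed, because the normal subgroup witnessing the \{f.g. free\}-by-sofic structure of $G$ is exactly a witness for the \{f.g. residually finite\}-by-sofic structure, with the same quotient $G/N$ remaining sofic.

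I do not expect any genuine obstacle here, since the Corollary is strictly weaker than the Theorem; the only thing to be careful about is not to confuse the two roles of the letter $N$ (the noetherian ring and the normal subgroup), and to make the subclass relation \emph{finitely generated free} $\subseteq$ \emph{finitely generated residually finite} explicit rather than merely asserting it. The write-up will therefore be one short paragraph: invoke residual finiteness of finitely generated free groups, conclude that $G$ is \{finitely generated residually finite\}-by-sofic, and apply the Main Theorem.
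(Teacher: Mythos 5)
Your proposal is correct and matches the paper's (implicit) argument exactly: Corollary A is deduced from the Main Theorem via the classical fact that finitely generated free groups are residually finite, so the witnessing normal subgroup for the \{f.g.\ free\}-by-sofic structure also witnesses the \{f.g.\ residually finite\}-by-sofic structure. Nothing further is needed.
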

Moreover, 
we construct countably many pairwise non-isomorphic finitely presented groups that satisfies both of these conjectures, and that are not known to be sofic.
In particular, these groups are not locally embeddable into amenable groups, also called LEA groups (see Section \ref{sec.4} for definitions).

\begin{corol2}
There exists an infinite family $\{G_i\}_{i\in\N}$ of pairwise non-isomorphic finitely presented non-LEA groups.
These groups are not known to be sofic, and $D[G_i]$ is directly finite and, equivalently, stably finite, with respect to all division rings $D$.
\end{corol2}
The groups described in Corollary B are given by the HNN extensions $\SL_n(\Z[1/p])\ast_{\F_r}$ and by the amalgamated free products
$\SL_n(\Z[1/p])\ast_{\F_r}\SL_n(\Z[1/p])$. Here $n\geq 3$ is an integer, $p>2$ is a prime number and $\F_r$ is a free subgroup of $\SL_n(\Z[1/p])$ of
rank $r$, for all $r\geq2$.
See Corollary~\ref{corKaNi} and Corollary~\ref{cor2} for the precise statements and the proofs.

\medskip

The paper is organized as follows: 
In Section~\ref{sec.spacemarked} we define the space of marked groups, we recall some useful properties and we prove preliminary results that will lead to the proof
of the Main Theorem.
In Section~\ref{prof} we prove the Main Theorem and we give some corollaries. In Section~\ref{sec.4} we apply our result to countably many pairwise non-isomorphic groups,
which are not yet known to be sofic, and for which
we establish Kaplansky's direct finiteness and stable finiteness conjectures. Deligne's famous example $\widetilde{\Sp_{2n}(\Z)}$ of a non residually finite group is among our interests.

\subsubsection*{Acknowledgments}
The author is very grateful to his advisor, Goulnara Arzhantseva, for inspiring questions and suggestions.
He wants to thank Simone Virili for sharing the text of his PhD Thesis, his work \cite{Vir} and for many constructive discussions over the topic. Thanks are also due to Nikolay Nikolov,
for explaining the proof of \cite[Theorem 1]{KaNi}.

\section{The space of marked groups}\label{sec.spacemarked}

In this section, we briefly discuss the space of marked groups. For more details and properties, see~\cite{Ch,ChGu,Gri84}.
We prove the following theorem:

\begin{theorem}\label{RF-by-sofic}
Let $Q$ be a group and $G$ be a finitely generated \{finitely generated residually finite\}-by-$Q$ group.
Then $G$ is the limit, in the space of marked groups, of finite-by-$Q$ groups.  
\end{theorem}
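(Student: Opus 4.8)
The plan is to realise $G$ as a limit of quotients of \emph{itself} that happen to be finite-by-$Q$. Write $N \trianglelefteq G$ for the \{finitely generated residually finite\} normal subgroup with $G/N \cong Q$, and fix a finite generating tuple $S = (s_1,\dots,s_k)$ of $G$; this makes $(G,S)$ a point of the space of marked groups on $k$ generators, which I identify with the normal subgroup $\ker(\pi)\trianglelefteq F_k$, where $\pi\colon F_k\to G$ is the marking homomorphism. The approximating groups will be $G_m := G/N_m$ for a suitable decreasing chain of finite-index subgroups $N_m$ of $N$.

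Since $N$ is finitely generated, for each $m\in\N$ it has only finitely many subgroups of index at most $m$; I would let $N_m$ be their intersection. Then $N_m$ has finite index in $N$, it is characteristic in $N$ (being invariant under every automorphism of $N$, which permutes the finitely many subgroups of index at most $m$), and the chain $(N_m)_m$ is decreasing. Residual finiteness of $N$ gives $\bigcap_m N_m = 1$. The one point that genuinely uses the finite generation of $N$, and which I expect to be the main obstacle, is upgrading normality from $N$ to all of $G$: a characteristic subgroup of a normal subgroup is normal in the whole group, so each $N_m\trianglelefteq G$, and only here does the hypothesis ``$N$ finitely generated'' do real work (an arbitrary finite-index subgroup of $N$ need not be $G$-invariant).

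Setting $G_m := G/N_m$ and marking it by the image $\overline{S}$ of $S$, the image of $N$ is $N/N_m$, a \emph{finite} normal subgroup with $G_m/(N/N_m)\cong G/N\cong Q$; thus every $G_m$ is finite-by-$Q$, as required. It remains to check $(G_m,\overline{S})\to(G,S)$. Under $\pi$ the group $G_m$ corresponds to $\pi^{-1}(N_m)\trianglelefteq F_k$, and these form a decreasing sequence each containing $\ker(\pi)$, with $\bigcap_m \pi^{-1}(N_m)=\pi^{-1}\!\left(\bigcap_m N_m\right)=\ker(\pi)$. For any word $w\in F_k$: if $w\in\ker(\pi)$ then $w\in\pi^{-1}(N_m)$ for all $m$; if $w\notin\ker(\pi)$ then $\pi(w)\neq 1$, so either $\pi(w)\notin N$ and $w\notin\pi^{-1}(N_m)$ for every $m$, or $\pi(w)\in N\setminus\{1\}$ and, by residual finiteness, $\pi(w)\notin N_m$ for all large $m$, whence $w\notin\pi^{-1}(N_m)$. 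In either case $w$ lies in $\pi^{-1}(N_m)$ for large $m$ exactly when it lies in $\ker(\pi)$, which is precisely convergence $\pi^{-1}(N_m)\to\ker(\pi)$ in the Cantor topology on normal subgroups of $F_k$, i.e. $(G_m,\overline{S})\to(G,S)$. Everything past the normality step is routine bookkeeping in the marked-group topology.
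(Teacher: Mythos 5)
Your proof is correct and follows essentially the same route as the paper: pass to finite-index subgroups of $N$ that are characteristic in $N$ (hence normal in $G$), observe the quotients are finite-by-$Q$, and verify convergence via the kernel criterion in $F_k$. The only cosmetic difference is that you build the characteristic subgroups $N_m$ directly as intersections of all subgroups of index at most $m$, which folds the paper's Lemma~\ref{piccololemma} into the construction.
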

We stress the following particular case:

\begin{corollary}\label{stressed.cor}
Let $G$ be a finitely generated \{finitely generated residually finite\}-by-sofic group.
Then $G$ is the limit, in the space of marked groups, of finite-by-sofic groups. 
\end{corollary}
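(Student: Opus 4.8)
The plan is to prove Theorem~\ref{RF-by-sofic} in its stated generality, with an arbitrary group $Q$ in place of the sofic quotient, and then obtain Corollary~\ref{stressed.cor} as the special case $Q$ sofic. So the real content lies in the theorem, and the corollary is a one-line specialization; I therefore organize my proposal around proving the theorem and then remark on the trivial descent to the corollary.

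Let me set up notation. We are given a finitely generated group $G$ with a normal subgroup $N \trianglelefteq G$ such that $N$ is finitely generated and residually finite, and $G/N \cong Q$. Fix a finite generating set, which gives $G$ as a marked group, i.e. as a quotient $\pi\colon \F_k \twoheadrightarrow G$ of a free group together with the normal subgroup $R = \ker \pi$. Convergence in the space of marked groups means: a sequence of marked quotients $\F_k \twoheadrightarrow G_m$ with kernels $R_m$ converges to $G$ iff for every relation $w \in R$ we eventually have $w \in R_m$, and for every non-relation $w \notin R$ we eventually have $w \notin R_m$. My plan is to exhibit, for each threshold, a finite-by-$Q$ marked group that agrees with $G$ on the finitely many words inside a growing ball of $\F_k$, and then let the balls exhaust $\F_k$.

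The construction exploits residual finiteness of $N$ to ``kill the tail'' of $N$ while preserving $Q$. First I would choose, for each $m \in \N$, a finite-index normal subgroup $K_m \trianglelefteq N$ such that $N/K_m$ is finite and $K_m$ avoids the finitely many nontrivial elements of $N$ lying in the $m$-ball; residual finiteness of the finitely generated $N$ supplies such $K_m$, and intersecting finitely many finite-index subgroups keeps the index finite. The crucial point, and the main obstacle, is that $K_m$ need not be normal in $G$, only in $N$, so the naive quotient $G/K_m$ is not defined. The standard fix is to pass to the characteristic core: since $N$ is finitely generated, it has only finitely many subgroups of each finite index, so the intersection $\widehat{K_m} = \bigcap_{g \in G} g K_m g^{-1}$ of all $G$-conjugates of $K_m$ is again of finite index in $N$, and it is normal in $G$ by construction. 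Moreover one must check that $\widehat{K_m}$ still avoids the prescribed finite set of nontrivial elements; this holds because the elements we want to avoid lie in $N$ and, after shrinking, we can arrange $\widehat{K_m}$ to miss the relevant ball, again using that $G$ acts on the finite set of subgroups of $N$ of the given index. This finite-index-characteristic-core step is exactly where finite generation of $N$ is indispensable, and I expect it to be the technical heart of the argument.

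Having produced $\widehat{K_m} \trianglelefteq G$ with $\widehat{K_m} \leq N$, $[N : \widehat{K_m}] < \infty$, and $\widehat{K_m}$ trivial on the $m$-ball of $N$, I set $G_m = G/\widehat{K_m}$. Then $G_m$ is finite-by-$Q$: its normal subgroup $N/\widehat{K_m}$ is finite, and the quotient is $G_m / (N/\widehat{K_m}) \cong G/N \cong Q$. Marking $G_m$ via the composite $\F_k \twoheadrightarrow G \twoheadrightarrow G_m$, I finish by verifying convergence $G_m \to G$: a word $w \in \F_k$ is a relation in $G_m$ iff $\pi(w) \in \widehat{K_m}$. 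If $w \in R$ then $\pi(w) = 1 \in \widehat{K_m}$ always, so relations of $G$ persist. If $w \notin R$ then $\pi(w) \neq 1$ in $G$; writing $w$ inside a fixed ball, its image either lies outside $N$ (whence outside $\widehat{K_m} \leq N$) or is a nontrivial element of $N$ inside the $m$-ball, which $\widehat{K_m}$ was built to avoid, so $w$ is a non-relation in $G_m$ for all large $m$. This gives $G_m \to G$ in the space of marked groups, proving the theorem. For the corollary, take $Q$ sofic: each $G_m$ is then finite-by-sofic, and the same sequence witnesses $G$ as a limit of finite-by-sofic groups.
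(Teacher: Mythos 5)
Your proposal is correct and follows essentially the same route as the paper: prove the statement for an arbitrary quotient $Q$, use residual finiteness plus finite generation of $N$ to produce finite-index subgroups of $N$ that are normal in all of $G$ (the paper's Lemma~\ref{piccololemma} takes the intersection of all subgroups of $N$ of a given index, i.e.\ a characteristic core, where you take the $G$-normal core --- both are finite intersections for the same reason), and verify convergence of the quotients $G/K$ via the relation/non-relation criterion. One micro-simplification: you need not ``arrange'' the core to avoid the prescribed elements, since it is contained in $K_m$ and therefore avoids them automatically.
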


A \emph{marked group} is a pair $(G,S)$, where $G$ is a finitely generated group and $S$ is a finite sequence of elements that generate $G$. If $\lvert S\rvert=n$ then $(G,S)$ is
called an $n$-marked group. Two $n$-marked groups $(G,(s_1,\dots,s_n))$ and $(G',(s'_1,\dots,s'_n))$ are isomorphic
if the bijection $\varphi(s_i):=s_i'$ extends to an isomorphism of groups $\varphi\colon G\to G'$. In particular, two marked groups with given generating sets of different size are 
never isomorphic as marked groups, although they might be isomorphic as abstract groups.

Let $\mathcal{G}_n$ denote the set of $n$-marked groups, up to isomorphism of marked groups. Then $\mathcal{G}_n$ corresponds bijectively to the set of
normal subgroups of the free group $\F_n$ on $n$ free generators.

Let $(G,S),(G',S')\in\mathcal{G}_n$ and let $N$, $N'$ be the normal subgroups of $\F_n$ such that $\F_n/N\cong G$, $\F_n/N'\cong G'$. Let $B_{\F_n}(r)$ denote the ball of radius $r$ in $\F_n$ centered 
at the identity element. The function
\begin{equation}
v(N,N'):=\sup\{r\in\N\mid N\cap B_{\F_n}(r)=N'\cap B_{\F_n}(r)\}
\end{equation}
defines on $\mathcal{G}_n$ the ultrametric 
\begin{equation}\label{ultrametric}
d\bigl((G,S),(G',S')\bigr):=2^{-v(N,N')}.
\end{equation}
An $n$-marked group $(G,S)$ can be viewed as an $(n+1)$-marked group by adding the trivial element $e_G$ to $S$. This defines an isometric embedding of $\mathcal{G}_n$ into $\mathcal{G}_{n+1}$.

Let $\mathcal{G}:=\bigcup_{n\in\N}\mathcal{G}_n$ be the \emph{space of marked groups}. The ultrametrics of \eqref{ultrametric} can be extended to an ultrametric
$d\colon \mathcal{G}\times\mathcal{G}\to \R_{\geq0}$, and $\bigl(\mathcal{G},d\bigr)$ is a compact totally disconnected ultrametric space~\cite{ChGu}.
The following fact is well known.

\begin{lemma}\label{char.conv}
Let $(G,S)$ and $\{(G_r,S_r)\}_{r\in\N}$ be marked groups in $\mathcal{G}_n$ and let $N, \{N_r\}_{r\in\N}$ be normal subgroups of $\F_n$ such that $\F_n/N\cong G$ and $\F_n/N_r\cong G_r$.
The following are equivalent:
\begin{enumerate}
 \item the sequence $\{(G_r,S_r)\}_{r\in\N}$ converges to the point $(G,S)$ in the space $\bigl(\mathcal{G}_n,d\bigr)$;
 \item for all $x\in N$ (respectively: for all $y\notin N$) there exists $\bar{r}$ such that $x\in N_r$ (respectively: $y\notin N_r$) for $r\geq\bar{r}$;
 \item for all $R\in\N$ there exists $\bar{r}$ such that the Cayley graphs $\Cay(G,S)$ and $\Cay(G_r,S_r)$ have balls of radius $R$ isomorphic as labeled directed graphs, for all $r\geq\bar{r}$.
\end{enumerate}
\end{lemma}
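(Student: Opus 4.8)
The plan is to establish the cycle of equivalences through $(1)\Leftrightarrow(2)$ and $(2)\Leftrightarrow(3)$, using throughout that each ball $B_{\F_n}(R)$ is a \emph{finite} subset of $\F_n$. This finiteness is the engine of the whole argument: it lets me combine any finite collection of ``eventually'' statements into a single threshold by taking the maximum of the finitely many indices involved.

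For $(1)\Leftrightarrow(2)$ I would simply unwind \eqref{ultrametric}: convergence $d\bigl((G_r,S_r),(G,S)\bigr)\to 0$ is, by definition, the assertion that $v(N,N_r)\to\infty$, i.e.\ that for every $R\in\N$ there is $\bar r$ with $N\cap B_{\F_n}(R)=N_r\cap B_{\F_n}(R)$ for all $r\geq\bar r$. The forward implication is then immediate: given $x\in N$ (resp.\ $y\notin N$), take $R$ to be its word length, so that $x\in N\cap B_{\F_n}(R)$ (resp.\ $y\in B_{\F_n}(R)\setminus N$); once $r$ is large the two intersections coincide, forcing $x\in N_r$ (resp.\ $y\notin N_r$). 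For the converse I would fix $R$, note that each of the finitely many elements of $B_{\F_n}(R)$ either lies in $N$ or does not, invoke $(2)$ to obtain for each one a threshold past which its $N_r$-membership agrees with its $N$-membership, and take the maximum of these finitely many thresholds; this yields $\bar r$ with $N\cap B_{\F_n}(R)=N_r\cap B_{\F_n}(R)$ for $r\geq\bar r$, whence $v(N,N_r)\geq R$ eventually and $(1)$ holds.

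For $(2)\Leftrightarrow(3)$ the guiding idea is that the pointed $S$-labeled ball of radius $R$ in $\Cay(G,S)$ is completely encoded by the relations of bounded length, concretely by $N\cap B_{\F_n}(2R+1)$: the vertices at distance $\leq R$ are the cosets $wN$ with $|w|\leq R$, two words $u,v$ of length $\leq R$ name the same vertex exactly when $uv^{-1}\in N$, and a directed $s_i$-edge between ball vertices is present precisely when $us_iv^{-1}\in N$ for some length-$\leq R$ word $v$; all the relators that intervene have length at most $2R+1$ (the exact constant is immaterial). Thus to prove $(2)\Rightarrow(3)$ I would fix $R$, apply $(2)$ to each of the finitely many words in $B_{\F_n}(2R+1)$, and pass to the maximal threshold $\bar r$; for $r\geq\bar r$ the equality $N\cap B_{\F_n}(2R+1)=N_r\cap B_{\F_n}(2R+1)$ makes $wN\mapsto wN_r$ a well-defined, center-fixing, label- and direction-preserving isomorphism of the radius-$R$ balls. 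For $(3)\Rightarrow(2)$, given $x\in N$ I would set $R=|x|$: the loop spelled by $x$ starting at the identity stays inside the ball of radius $R$, so an isomorphism of radius-$R$ balls (which may be taken to fix the center, since the label-preserving automorphisms of a Cayley graph are exactly the left translations) carries it to the loop spelled by $x$ in $\Cay(G_r,S_r)$, giving $x\in N_r$; symmetrically, if $y\notin N$ the path spelled by $y$ ends at a vertex different from the center, and the same isomorphism shows $y\notin N_r$.

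I expect the only genuinely delicate point to be the bookkeeping inside $(2)\Leftrightarrow(3)$: one must pin down the exact radius (the factor $2$, arising from comparing two length-$\leq R$ representatives of a vertex) needed to reconstruct a ball from the relators, and conversely the radius needed to detect a single relator, and one must handle the base point so that ``isomorphic balls'' really transports closed loops to closed loops. Both are resolved by the finiteness of the balls together with the fact that the center of a Cayley-graph ball is intrinsic, so I regard this step as routine rather than conceptual.
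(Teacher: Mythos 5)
The paper does not actually prove this lemma --- it is introduced with ``The following fact is well known'' and used without argument --- so there is no in-paper proof to compare against; your write-up supplies the standard argument and it is correct. The equivalence $(1)\Leftrightarrow(2)$ is exactly the unwinding of the ultrametric \eqref{ultrametric} combined with the finiteness of the balls $B_{\F_n}(R)$, and encoding the radius-$R$ labeled ball by the relators in $N\cap B_{\F_n}(2R+1)$ is the right mechanism for $(2)\Leftrightarrow(3)$. The one claim I would flag is the parenthetical assertion that the ball isomorphism ``may be taken to fix the center'' because the center of a ball in a Cayley graph is intrinsic: that is false in general (for a finite group and $R$ at least the diameter, the ball is the whole Cayley graph, which is vertex-transitive, so every vertex looks like the center). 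Fortunately you do not need it. Any label- and direction-preserving isomorphism $\varphi$ of the radius-$R$ balls carries the path spelled by a word $w$ with $\lvert w\rvert\le R$ starting at $e_G$ to the path spelled by $w$ starting at $\varphi(e_G)$, whose endpoint is $\varphi(e_G)\,\pi_r(w)$; since $\varphi$ is injective, the path at $e_G$ is closed if and only if its image is, which gives $w\in N\Leftrightarrow w\in N_r$ for all $\lvert w\rvert\le R$ with no normalization of the base point. With that substitution for the ``intrinsic center'' remark, $(3)\Rightarrow(2)$ goes through and the rest of your argument stands as written.
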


The following easy lemma is used in the proof of Theorem~\ref{RF-by-sofic}.

\begin{lemma}\label{piccololemma}
Let $K\unlhd N\unlhd G$ be a subnormal chain of groups, where $N$ is finitely generated, and $K$ has finite index in $N$.

Then there exists a normal subgroup $\widetilde{K}\unlhd G$, contained in $K$, such that $N/\widetilde{K}$ is finite.
\begin{proof}
Since $N$ is finitely generated, for any given positive integer $d\in\N$, $N$ has only finitely many subgroups of index $d$.
Let $\widetilde{K}$ be the intersection of all subgroups $H\unlhd N$ with $[N:H]=[N:K]$. Then $\widetilde{K}$ is a finite index characteristic subgroup of $N$, and hence it is normal in $G$.

\end{proof}
\end{lemma}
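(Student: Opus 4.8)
The plan is to build $\widetilde{K}$ as an intersection of finite-index subgroups of $N$, exploiting the standard fact that a finitely generated group has only finitely many subgroups of any fixed finite index. First I would set $d:=[N:K]$, which is finite by hypothesis. Since $N$ is finitely generated, the collection of subgroups of $N$ of index $d$ is finite: each such subgroup is a point-stabiliser for a transitive action of $N$ on a $d$-element set, and there are only finitely many homomorphisms $N\to\Sym(d)$ because $N$ has finitely many generators.

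Next I would define $\widetilde{K}$ to be the intersection of all subgroups $H\leq N$ with $[N:H]=d$. Two of the three required properties are then straightforward. The containment $\widetilde{K}\subseteq K$ is immediate, since $K$ itself is one of the subgroups being intersected. The finiteness of $N/\widetilde{K}$ follows because $\widetilde{K}$ is an intersection of \emph{finitely many} subgroups of finite index, and a finite intersection of finite-index subgroups again has finite index in $N$.

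The crucial point---and the only place where the ambient group $G$ enters---is the normality of $\widetilde{K}$ in $G$. Here I would observe that the defining description of $\widetilde{K}$ makes it characteristic in $N$: any automorphism of $N$ preserves index, hence permutes the (finitely many) subgroups of index $d$ among themselves, and therefore fixes their intersection. In particular, for each $g\in G$ conjugation by $g$ restricts to an automorphism of $N$, where one uses $N\unlhd G$; consequently $g\widetilde{K}g^{-1}=\widetilde{K}$ for all $g\in G$, that is, $\widetilde{K}\unlhd G$.

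I expect the only real subtlety to be this last step. The naive choice $\widetilde{K}=K$ fails because $K$ is merely normal in $N$ and need not be normal in $G$, and there is no reason for a single index-$d$ subgroup to be conjugation-invariant. Replacing $K$ by the intersection over \emph{all} index-$d$ subgroups is exactly what upgrades ``normal in $N$'' to ``characteristic in $N$'', and a characteristic subgroup of a normal subgroup is automatically normal in the whole group. The finite generation of $N$ is indispensable here, as it is what guarantees the intersection is taken over a finite set and hence remains of finite index.
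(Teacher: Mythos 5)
Your proposal is correct and takes essentially the same route as the paper: both define $\widetilde{K}$ as the intersection of the finitely many index-$[N:K]$ subgroups of $N$, observe it is characteristic of finite index in $N$, and conclude normality in $G$ from $N\unlhd G$. The only (immaterial) difference is that the paper intersects over the normal subgroups of that index while you intersect over all of them; both intersections contain $K$'s required properties and the argument is identical.
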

We are now ready to prove Theorem \ref{RF-by-sofic}.

\begin{proof}[\bf Proof of Theorem \ref{RF-by-sofic}]
Let $G$ be a \{finitely generated residually finite\}-by-$Q$ group and suppose $G$ is generated by a finite set $S$.
Let $N\unlhd G$ be a finitely generated residually finite subgroup such that $G/N\cong Q$.

As $N$ is residually finite, there exists a family $\{K_r\}_{r\in\N}$ of finite index normal subgroups of $N$ such that $\bigcap_{r\in\N}K_r=\{e\}$ and $K_{r+1}\subseteq K_r$ for all
$r\in\N$.
As $N$ is finitely generated, we can assume that $K_r\unlhd G$ for all $r\in\N$ by Lemma~\ref{piccololemma}.

Consider the family of finite-by-$Q$ groups $\{G/K_r\}_{r\in\N}$.
For every $r$, let $S_r$ be the image of $S$ in $G/K_r$ under the canonical projection $\lambda_r\colon G\twoheadrightarrow G/K_r$. 
We prove that the sequence $\{(G/K_r,S_r)\}_{r\in\N}$ converges to $(G,S)$ in $\mathcal{G}_{\lvert S\rvert}$ using the second condition of Lemma~\ref{char.conv}.

Let $\pi\colon\F\twoheadrightarrow G$ be the canonical surjective homomorphism from the finitely generated free group $\F$ on $\lvert S\rvert$ free generators, 
and $\pi_r\colon\F\twoheadrightarrow G/K_r$. Then $\pi_r=\lambda_r\circ\pi$:
\begin{equation*}
\xymatrix{\F\ar[rr]^{\pi}\ar[rrd]_{\pi_r}&&G\ar[d]^{\lambda_r}\\
&&G/K_r 
}
\end{equation*}
Set $\Lambda:=\ker\pi$ and $\Lambda_r:=\ker\pi_r$.
By definition, $\Lambda\leq\Lambda_r$ for every $r$, so, to check that the second condition of Lemma~\ref{char.conv} is satisfied,
we have only to prove that for all $y\notin\Lambda$ there exists $\bar{r}$ such that $y\notin\Lambda_r$, for $r\geq \bar{r}$.
Let $y\notin \Lambda$ and $\pi(y)=:g\in G\setminus\{e_G\}$.
Note that if $g\notin N$ then $g\notin K_r$ for all $r$, as $K_r\subseteq N$. Thus, for such $g\notin N$ we have
\begin{equation}
\pi_r(y)=\lambda_r(\pi(y))=\lambda_r(g)\neq e_{G/K_r}\qquad \forall r\in\N,
\end{equation}
that is to say, $y\notin \Lambda_r$ for all $r\in\N$.

If $g\in N\setminus\{e_G\}$, then there exists $\bar{r}$ such that $g\notin K_r$ for all $r\geq \bar{r}$, because $\bigcap_{r\in\N}K_r=\{e\}$ and because this family of normal subgroups
is totally ordered by inclusion.
In particular,
\begin{equation}
\pi_r(y)=\lambda_r(g)\neq e_{G/K_r}\qquad \forall r\geq \bar{r}. 
\end{equation}
This implies that $y\notin\Lambda_r$ for $r\geq\bar{r}$, and the proof is completed.

\end{proof}

\section{Proof of the Main Theorem}\label{prof}
We first show that the assertions of Kaplansky's direct finiteness and stable finiteness conjectures are preserved under taking limits in the space of marked groups.
\begin{proposition}\label{limit}
Let $N$ be a noetherian ring and $(G,S)$ be the limit, in the space of marked groups, of the sequence $\{(G_r,S_r)\}_{r\in\N}$.
If $N[G_r]$ is directly finite (respectively: stably finite) for all $r\in\N$ then so is $N[G]$.

\begin{proof}
Suppose first that $N[G_r]$ is directly finite for all $r\in\N$, and consider two non-trivial elements $x=\sum_{g\in G}k_gg$ and $y=\sum_{g\in G}h_gg\in N[G]$ such that $xy=1$. 
We want to prove that $yx=1$.

Let $yx=\sum_{g\in G}l_gg$ and consider 
$$m:=\max\{\lVert g\rVert_S\mid k_g\neq 0\text{ or }h_g\neq 0\text{ or }l_g\neq 0\},$$
where $\lVert -\rVert_S$ denotes the norm induced by the word metric on $G$ given by the finite generating set $S$. That is to say:
$$\lVert g\rVert_S:=\min\{k\mid g=s_1\dots s_k,\quad s_i\in S\cup S^{-1}\}.$$
Since the sequence $\{(G_r,S_r)\}_{r\in\N}$ converges to $(G,S)$, by Lemma~\ref{char.conv} there exists $\bar{r}$ such that, for all $r\geq \bar{r}$, $\Cay(G,S)$ and $\Cay(G_r,S_r)$ have
balls of radius $m$ isomorphic as labeled directed graphs. 
The group-coefficients of $x$ and $y$ have the same partial multiplication in $G$ as in $G_r$, and moreover $yx=1$ in $N[G_r]$. It thus follows that $yx=1$ in $N[G]$.

The same arguments work when $N[G_r]$ is stably finite for all $r\in\N$.

\end{proof}
\end{proposition}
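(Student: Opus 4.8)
The plan is to reduce the ring-theoretic identity to a purely local computation in the Cayley graph, exploiting that every element of a group ring has finite support. Suppose $N[G_r]$ is directly finite for every $r$, and take $x=\sum_{g\in G}k_g g$ and $y=\sum_{g\in G}h_g g$ in $N[G]$ with $xy=1$; I must show $yx=1$. First I would record that $yx=\sum_{g\in G}l_g g$ is a definite, finitely supported element and set
$$m:=\max\{\lVert g\rVert_S \mid k_g\neq0\ \text{or}\ h_g\neq0\ \text{or}\ l_g\neq0\}.$$
The point is that both the validity of $xy=1$ and the value of $yx$ are determined by the multiplication table of $G$ restricted to products $g\cdot g'$ with $g,g'$ ranging over the supports of $x$ and $y$; each such product has word-norm at most $2m$, so only a finite, radius-bounded portion of $\Cay(G,S)$ is involved.

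Next I would transport this finite datum to $G_r$ using convergence. By condition (3) of Lemma~\ref{char.conv} there is an index $\bar r$ such that, for all $r\geq\bar r$, the balls of radius $2m$ in $\Cay(G,S)$ and in $\Cay(G_r,S_r)$ are isomorphic as labeled directed graphs. Relabelling the support elements of $x$ and $y$ through this ball isomorphism produces $x_r,y_r\in N[G_r]$; since the isomorphism preserves every product $g\cdot g'$ occurring in the computation, the partial multiplication of the coefficients agrees in $G$ and in $G_r$. Consequently the relation $xy=1$ transfers to $x_ry_r=1$ in $N[G_r]$. Applying the hypothesis that $N[G_r]$ is directly finite yields $y_rx_r=1$; but $y_rx_r$ is exactly the image of $yx$ under the same ball isomorphism, and the equality with $1$ pulls back, so $yx=1$ in $N[G]$. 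The stably finite case is identical after replacing $N[G]$ by $\Mat_{k\times k}(N[G])$: a pair $X,Y$ with $XY=I_k$ still has all entries supported on finitely many group elements, so the same $m$ and the same ball isomorphism turn $XY=I_k$ into $X_rY_r=I_k$, whence $Y_rX_r=I_k$ and $YX=I_k$.

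The only delicate point is the radius bookkeeping: one must ensure that the ball isomorphism preserves all the products $g\cdot g'$ and $g'\cdot g$ that enter the two computations, not merely the support elements themselves. The cleanest way to secure this is to pass to the free group $\F$ on $\lvert S\rvert$ generators and argue through condition (2) of Lemma~\ref{char.conv}: lifting each support element to a geodesic word, every coincidence of two such products is witnessed by an element of $\ker\pi$ of length at most $4m$, and for $r$ large this element lies in $\ker\pi_r$ as well (and conversely), which is precisely what makes the partial multiplication transfer in both directions. I would finally remark that noetherianity of $N$ is never actually invoked in this proposition: the argument is local and valid over an arbitrary coefficient ring, the hypothesis merely being inherited from the ambient setting of the paper.
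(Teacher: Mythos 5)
Your proof is correct and follows essentially the same route as the paper's: transfer the finitely supported relation $xy=1$ to $G_r$ through a ball isomorphism of the Cayley graphs provided by Lemma~\ref{char.conv}, apply direct finiteness of $N[G_r]$ there, and pull $y_rx_r=1$ back to $G$. In fact your bookkeeping is slightly more careful than the paper's, which requests balls of radius only $m$ even though the (possibly cancelling) intermediate products $g\cdot g'$ can have norm up to $2m$; your use of radius $2m$, and your closing observation that noetherianity of $N$ is never actually needed in this proposition, both tighten the same argument.
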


\begin{proof}[\bf Proof of the Main Theorem]
First we suppose that the group in question is finitely generated, so 
let $G$ be a finitely generated \{finitely generated residually finite\}-by-sofic group. By Corollary~\ref{stressed.cor}, $G$ is the limit of finite-by-sofic groups, which 
satisfy Kaplansky's direct finiteness conjecture with respect to all noetherian rings \cite[Corollary 5.4]{Vir}. Hence, by Proposition~\ref{limit}, $G$ satisfies the conjecture
with respect to all noetherian rings.

If the group $G$ is not finitely generated, consider a finitely generated residually finite normal subgroup $K$ such that $G/K$ is sofic.
Then $G$ is the directed union of its finitely generated subgroups containing such $K$:
\begin{equation}\label{dir.union}
G=\bigcup\{H\mid K\leq H\leq G\text{ and }H\text{ is finitely generated}\}. 
\end{equation}
These are finitely generated \{finitely generated residually finite\}-by-sofic groups, and hence satisfy
Kaplansky's direct finiteness conjecture by the first part of the proof. 

Fix a noetherian ring $N$ and consider two elements $x,y\in N[G]$ such that $xy=1$.
The group-coefficients of $x$, $y$ and $yx$, being non-trivial only finitely many times, sit in some finitely generated subgroup $H\leq G$ appearing in the directed union in \eqref{dir.union}.

The group ring $N[H]$ is directly finite by the first part of this proof, so $yx=1$ in $N[H]$.
This implies that $yx=1$ in $N[G]$.
Thus, $G$ satisfies the conjecture.
\end{proof}

There is a variant of Lemma~\ref{piccololemma}, in the case when the group $N/K$ is solvable: 
Assume we are given a subnormal chain $K\unlhd N\unlhd G$ and suppose that $N/K$ is solvable, then there exists a normal subgroup $\widetilde{K}\unlhd G$, contained in $K$, such that $N/\widetilde{K}$ is solvable as well. 

To the author's knowledge, the following are open questions:
\begin{question}\label{question1}
Let $N$ be a noetherian ring and $G$ be a solvable-by-sofic group. Is $N[G]$ directly finite? 
\end{question}
\begin{question}
Let $N$ be a noetherian ring and $G$ be a solvable group. Does there exist a noetherian ring $N'$ such that $N[G]$ embeds into $N'$?
\end{question}
An affirmative answer to the latter question implies an affirmative answer to Question \ref{question1}.
If Question \ref{question1} has an affirmative answer, then our argument as in
the proof of Theorem~\ref{RF-by-sofic} implies that a finitely generated \{residually solvable\}-by-sofic group $G$ is 
the limit in $\mathcal{G}$ of solvable-by-sofic groups, and hence that a \{residually solvable\}-by-sofic group is directly finite.

\medskip

We recall now the definition and basic facts on crossed products. They are useful in the applications of our Main Theorem.

Given a ring $R$ and a group $G$, a \emph{crossed product} $R\ast G$ of $G$ over $R$ is a ring constructed as follows. 
Assign uniquely to every $g\in G$ a symbol $\bar{g}$, and let $\bar{G}$ be the collection of these symbols.
As a set, 
\begin{equation*}
 R\ast G:=\Bigl\{\sum_{g\in G}r_g \bar{g}\mid g\in G, r_g\in R\text{ is almost always }0 \Bigr\}.
\end{equation*}
The sum is defined component-wise,
\begin{equation*}
\Bigl(\sum_{g\in G}r_g\bar{g}\Bigr)+ \Bigl(\sum_{g\in G}s_g\bar{g}\Bigr):=\sum_{g\in G}(r_g+s_g)\bar{g}.
\end{equation*}
The product in $R\ast G$ is specified in terms of two maps 
\begin{equation*}
 \tau\colon G\times G\to U(R),\qquad \sigma\colon G\to \Aut(R),
\end{equation*}
where $U(R)$ is the group of units of $R$ and 
$\Aut(R)$ is the group of ring automorphisms of $R$. Let $r^{\sigma(g)}$ denote the result of the action of $\sigma(g)$ on $r$. Then, for all $r\in R$ and $g,g_1,g_2,g_3\in G$,
the maps $\sigma$ and $\tau$ satisfy
\begin{equation*}
 \sigma(e)=1,\qquad\tau(e,g)=\tau(g,e)=1,
\end{equation*}
and
\begin{equation*}
 \tau(g_1,g_2)\tau(g_1g_2,g_3)=\tau(g_2,g_3)^{\sigma(g_1)}\tau(g_1,g_2g_3),\qquad
 r^{\sigma(g_2)\sigma(g_1)}=\tau(g_1,g_2)r^{\sigma(g_1g_2)}\tau(g_1,g_2)^{-1}.
\end{equation*}
These conditions guaranties that the product
\begin{equation*}
\Bigl(\sum_{g\in G}r_g\bar{g}\Bigr)\cdot \Bigl(\sum_{g\in G}s_g\bar{g}\Bigr):=\sum_{g\in G}\Bigl(\sum_{h_1h_2=g} r_{h_1}s_{h_2}^{\sigma(h_1)}\tau(h_1,h_2)\Bigr)\bar{g}
\end{equation*}
is associative.

Certain crossed products have their own specific name.
If the maps $\sigma$ and $\tau$ are trivial, the crossed product $R\ast G$ is the group ring $R[G]$.
If $\sigma$ is trivial, then $R\ast G=R^t[G]$ is a \emph{twisted group ring}, while if $\tau$ is trivial then $R\ast G=RG$ is a \emph{skew group ring}.

Given a 
normal subgroup $N\unlhd G$ and a fixed crossed product $R\ast G$, we have
\begin{equation}\label{crpro1}
R\ast G=\bigl( R\ast N\bigr)\ast G/N, 
\end{equation}
where the latter is 
some crossed product of the group $G/N$ over the ring $R\ast N$, and $R\ast N$ is the subring of $R\ast G$ induced by the subgroup $N$ \cite[Lemma 1.3]{Pas} 
(that is, the  maps $\sigma$ and $\tau$ associated to the crossed product $R\ast N$ are the restrictions of the
ones associated to $R\ast G$).
In particular,
\begin{equation}\label{crpro2}
R[G]=R[N]\ast G/N. 
\end{equation}
\smallskip

\noindent
We now state some interesting corollaries of our main result. 

There exist one-relator groups which are not residually finite~\cite{BaSo62}, or not even residually solvable~\cite{Ba69}. 
Whether or not all one-relator groups are sofic is a well-known open problem.

Wise recently proved that one-relator groups with torsion are residually finite~\cite{Wis}, answering a longstanding conjecture of Baumslag~\cite{Ba67}. 
Combining this deep result of Wise with our Main Theorem, we obtain:

\begin{corollary}\label{1-by-sofic}
Let $D$ be a division ring and $G$ be a \{finitely generated one-relator\}-by-sofic group. 
Then $D[G]$ is directly finite and, equivalently, stably finite.
\begin{proof}
Let $N\unlhd G$ be a normal subgroup of $G$ such that $G/N$ is sofic and $N$ is a finitely generated one-relator group.
If $N$ is torsion-free, then its group ring $D[N]$ embeds into a division ring $D'$ \cite{LeLe}.
By \eqref{crpro2} we have that $D[G]=D[N]\ast G/N$. Hence $D[G]$ embeds into $D'\ast G/N$, which is directly finite \cite[Corollary 5.4]{Vir}. Thus also $D[G]$ is directly finite.

If $N$ has torsion, then it is residually finite \cite{Wis}. Thus $G$ satisfies the hypotheses of the Main Theorem, and $D[G]$ is directly finite.
\end{proof}
\end{corollary}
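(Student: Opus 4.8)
The plan is to prove Corollary~\ref{1-by-sofic} by separating into two cases according to whether the normal finitely generated one-relator subgroup $N$ is torsion-free or has torsion, and in each case to reduce the direct finiteness of $D[G]$ to an already-established result. First I would extract from the hypothesis a normal subgroup $N\unlhd G$ with $G/N$ sofic and $N$ a finitely generated one-relator group, and note that the crossed product decomposition \eqref{crpro2} gives $D[G]=D[N]\ast G/N$; this is the structural identity that both cases will exploit.

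In the torsion-free case, the key input is the theorem that the group ring $D[N]$ of a torsion-free one-relator group over a division ring embeds into a division ring $D'$ (this is the content of \cite{LeLe}, relying on the fact that torsion-free one-relator groups are locally indicable and that $D[N]$ satisfies the appropriate Ore/embedding conditions). Once one has such an embedding $D[N]\hookrightarrow D'$, it should extend compatibly to an embedding of crossed products $D[N]\ast G/N \hookrightarrow D'\ast G/N$, where the cocycle and action maps $\sigma,\tau$ are carried along. Then $D'\ast G/N$ is a crossed product of a sofic group over a division ring, hence directly finite by \cite[Corollary 5.4]{Vir}; since direct finiteness is inherited by subrings (an equation $xy=1$, $yx\neq 1$ in a subring would persist in the overring), it follows that $D[G]$ is directly finite.

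In the torsion case, the argument is softer: by Wise's theorem \cite{Wis} a one-relator group with torsion is residually finite, so $N$ is a finitely generated residually finite group. Then $G$ is precisely a \{finitely generated residually finite\}-by-sofic group, so the Main Theorem applies directly and yields that $D[G]$ is directly finite (and equivalently stably finite), noting that a division ring is in particular a noetherian ring.

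The main obstacle I anticipate is making the passage from the embedding $D[N]\hookrightarrow D'$ to an embedding of the \emph{crossed products} fully rigorous: one must check that the action $\sigma$ of $G/N$ on $D[N]$ and the cocycle $\tau$ extend to $D'$, i.e.\ that the automorphisms of $D[N]$ induced by conjugation prolong to automorphisms of the division ring $D'$ of fractions and that the twisting units remain invertible there. This is expected to hold by functoriality of the Ore-type localization producing $D'$ (the embedding being canonical enough that $N$-automorphisms extend uniquely), but it is the step where care is needed rather than the invocation of the two black-box results \cite{LeLe} and \cite{Vir}.
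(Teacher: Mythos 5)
Your proposal follows exactly the same two-case argument as the paper: Lewin--Lewin embedding plus the crossed product decomposition \eqref{crpro2} and Virili's result in the torsion-free case, and Wise's residual finiteness theorem plus the Main Theorem in the torsion case. The only difference is that you explicitly flag the need to extend the crossed-product structure from $D[N]$ to $D'$, a point the paper passes over silently; your instinct that this is where the care is needed is sound, and the rest matches the paper's proof.
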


From Corollary \ref{1-by-sofic}, in the particular case when the sofic group is trivial, we recover the fact that the group ring of a finitely generated one-relator group is directly and stably finite.

\begin{corollary}
Let $D$ be a division ring and $G$ be a finitely generated one-relator group, then $D[G]$ is directly finite and, equivalently, stably finite. 
\end{corollary}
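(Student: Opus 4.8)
The plan is to recognize this statement as the degenerate case of Corollary~\ref{1-by-sofic}, obtained by letting the sofic quotient collapse to the trivial group. Concretely, given a finitely generated one-relator group $G$, I would take the normal subgroup witnessing the extension structure to be $G$ itself, so that the quotient $G/G$ is trivial. The trivial group is finite, hence sofic, and therefore $G$ qualifies as a \{finitely generated one-relator\}-by-sofic group in the sense of Corollary~\ref{1-by-sofic}. Applying that corollary directly yields that $D[G]$ is directly finite and, equivalently, stably finite, which is exactly the assertion.

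It is instructive to note how the two branches in the proof of Corollary~\ref{1-by-sofic} specialize in this setting. If $G$ is torsion-free, then $D[G]$ embeds into a division ring \cite{LeLe}, and any division ring is trivially directly finite, so $D[G]$ inherits direct finiteness from the overring. If instead $G$ has torsion, then Wise's theorem guarantees that $G$ is residually finite \cite{Wis}; being finitely generated, $G$ is then a \{finitely generated residually finite\}-by-\{trivial\} group, and the Main Theorem applies to conclude direct finiteness. Either way the conclusion is the same, and it matches the output of Corollary~\ref{1-by-sofic} with trivial sofic quotient.

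I do not expect any genuine obstacle at the level of this statement: all the substantive content has already been expended upstream, namely Virili's direct finiteness result for crossed products over sofic groups \cite{Vir}, the Lewin--Lewin embedding theorem for torsion-free one-relator group rings \cite{LeLe}, and Wise's residual finiteness theorem for one-relator groups with torsion \cite{Wis}, all of which are packaged into Corollary~\ref{1-by-sofic}. The only thing left to verify here is the trivial observation that the one-element group is sofic, after which the corollary is a pure specialization.
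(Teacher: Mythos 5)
Your proof is correct and takes essentially the same route as the paper: the paper derives this corollary precisely as the special case of Corollary~\ref{1-by-sofic} in which the sofic quotient is trivial. Your additional remarks on how the torsion-free branch (Lewin--Lewin embedding) and the torsion branch (Wise's residual finiteness plus the Main Theorem) specialize are accurate but not needed beyond the observation that the trivial group is sofic.
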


Finitely generated right-angled Artin groups are known to be residually finite. We deduce the following:
\begin{corollary}
Let $N$ be a noetherian ring and $G$ be a \{finitely generated right-angled Artin group\}-by-sofic group. Then $N[G]$ is directly finite and, equivalently, stably finite.
\end{corollary}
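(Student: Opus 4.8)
The plan is to reduce the right-angled Artin group (RAAG) case directly to the Main Theorem by invoking the known residual finiteness of finitely generated RAAGs. The statement asserts that if $G$ is a \{finitely generated right-angled Artin group\}-by-sofic group, then $N[G]$ is directly and stably finite for every noetherian ring $N$. First I would unpack the hypothesis: there is a normal subgroup $R\unlhd G$ such that $R$ is a finitely generated right-angled Artin group and $G/R$ is sofic.

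The key observation is that finitely generated right-angled Artin groups are residually finite. This is a classical fact, following for instance from the faithful representation of a RAAG into a right-angled Coxeter group, or from the embedding of RAAGs into the residually finite groups $\mathrm{GL}_n(\Z)$; in any case it is standard and may be cited directly. Thus $R$ is a finitely generated residually finite group, and $G/R$ is sofic.

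With this in hand, $G$ is precisely a \{finitely generated residually finite\}-by-sofic group, which is exactly the hypothesis of the Main Theorem. Therefore I would simply apply the Main Theorem to conclude that $N[G]$ is directly finite and, equivalently, stably finite, for every noetherian ring $N$. The proof is thus a one-line deduction once residual finiteness of finitely generated RAAGs is recorded.

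There is essentially no obstacle here: the only nontrivial input is the residual finiteness of finitely generated right-angled Artin groups, which is a well-known result rather than something requiring new work. The entire content of the corollary is the recognition that this class of groups falls within the residually finite hypothesis of the Main Theorem. I would write the proof as: \emph{Finitely generated right-angled Artin groups are residually finite, so $G$ is a \{finitely generated residually finite\}-by-sofic group and the conclusion follows from the Main Theorem.}
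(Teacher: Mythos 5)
Your proposal is correct and matches the paper exactly: the paper states the corollary immediately after noting that finitely generated right-angled Artin groups are residually finite, so the proof is precisely the one-line reduction to the Main Theorem that you describe.
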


\begin{remark}
Here is an alternative way of proving the Main Theorem, which was suggested by Simone Virili after the first version of this paper was written. 
In~\cite{Mon} it is observed that, given a noetherian ring $N$ and
$$\mathcal{C}:=\{\text{groups }G\text{ such that }N[G]\text{ is directly finite}\},$$
if a group $G$ is fully residually $\mathcal{C}$ then $G\in\mathcal{C}$. 
One then argues that a \{finitely generated residually finite\}-by-sofic group is residually finite-by-sofic, which is equivalent of being fully residually finite-by-sofic.
Hence the Main Theorem follows.

\end{remark}

\section{Examples}\label{sec.4}

We now apply our Main Theorem to some concrete groups, whose (non-)soficity still intrigues the experts. We conclude that they satisfy Kaplansky's direct and stable finiteness conjectures.
Moreover, we provide countably many new explicit presentations of groups satisfying these two conjectures.
These groups are not Locally Embeddable into Amenable (LEA for short), and it is not known whether or not they are sofic.

A finitely generated LEA group is the limit in $\mathcal{G}$ of amenable groups. In particular, LEA groups are sofic.
There exist examples of sofic groups which are not LEA, and the class of LEA groups is the biggest known class of groups strictly contained in the class of sofic groups.

In what follows, we use the fact that a finitely presented LEA group is residually amenable~\cite[Proposition 7.3.8]{CSC}. We refer to \cite[\S 7.3]{CSC} for more informations about LEA groups.

\subsection{Deligne's group}
A famous example considered by Deligne~\cite{De78} is the following.
Let $n\geq 2$ be an integer and $\widetilde{\Sp_{2n}(\Z)}$ be the preimage of the symplectic group $\Sp_{2n}(\Z)$ in the universal cover $\widetilde{\Sp_{2n}(\R)}$ of $\Sp_{2n}(\R)$.
It is known~\cite{De78} that $\widetilde{\Sp_{2n}(\Z)}$ is given by the following central extension
\begin{equation}\label{central.ext}
\{e\}\longrightarrow \Z\longrightarrow \widetilde{\Sp_{2n}(\Z)}\longrightarrow \Sp_{2n}(\Z)\longrightarrow\{e\}.
\end{equation}
The group $\widetilde{\Sp_{2n}(\Z)}$ is finitely presented as it is an extension of two finitely presented groups. 
Moreover, the group is not residually 
finite~\cite{De78} and it satisfies Kazhdan's property (T)~\cite[Example 1.7.13 (iii)]{BeHaVa}. This immediately implies that it is not an LEA group.

Our Main Theorem implies that $N[\widetilde{\Sp_{2n}(\Z)}]$ is directly finite for all noetherian rings $N$. Indeed, $\widetilde{\Sp_{2n}(\Z)}$ is \{finitely generated free\}-by-sofic, as shown by~\eqref{central.ext}.

\subsection{Finitely presented amalgamated products and HNN extensions}
From now on, if $\Gamma$ is a group then $\bar{\Gamma}$ denotes an isomorphic copy of $\Gamma$. If
$\Gamma=\langle X\mid R\rangle$ is a presentation of the group $\Gamma$, let $\bar{X}$ and $\bar{R}$ denote the same generators and relators in the isomorphic
copy $\bar\Gamma$.

Let $p>2$ be a prime number and $n\geq 3$. The group $\SL_n(\Z[1/p])$ is finitely presented~\cite[Theorem 4.3.21]{HO} and has Kazhdan's property (T)~\cite{BeHaVa}.
Moreover, it satisfies the \emph{congruence subgroup property}~\cite{BaMiSe}. 
This means that every finite index subgroup $H\leq \SL_n(\Z[1/p])$ contains the kernel of the natural projection
\begin{equation}\label{projection}
 \pi_q\colon\SL_n(\Z[1/p]) \twoheadrightarrow \SL_n(\Z/q\Z),
\end{equation}
for some $q$ coprime with $p$. In particular, if the finite index subgroup $H$ is normal in $\SL_n(\Z[1/p])$, it follows that
\begin{equation}\label{eq.quotients}
\frac{\SL_n(\Z[1/p])}{H}\cong\SL_n(\Z/q\Z) \qquad\text{or}\qquad\frac{\SL_n(\Z[1/p])}{H}\cong \PSL_n(\Z/q\Z),
\end{equation}
for exactly one $q$ coprime with $p$.

In what follows, given an element $x\in\SL_n(\Z[1/p])$ and a projection $\pi$ from $\SL_n(\Z[1/p])$ onto $\SL_n(\Z/q\Z)$ or $\PSL_n(\Z/q\Z)$, we denote the order of $\pi(x)$ by $o_x$.

The proof of the following theorem is adapted from \cite[Theorem 1]{KaNi}, where an analogous fact is proved, but in the case when the amalgamated subgroup is infinite cyclic.
In that case, the resulting group is known to be sofic. In contrast to \cite{KaNi}, our aim is to produce non-LEA groups that are not known to be sofic and that satisfy Kaplansky's direct and 
stable finiteness conjectures.
\begin{theorem}\label{theoKaNi}
Let $p>2$ be a prime number, $n\geq 3$,
let $\Gamma:=\SL_n(\Z[1/p])=\langle X\mid R\rangle$. Let $\langle a,b\rangle =F\leq\Gamma$ be the subgroup generated by the matrices
\begin{equation*} a=\begin{pmatrix}
     1&2&0\\ 0&1&0\\ 0&0& \I_{n-2}
     \end{pmatrix}, 
  \qquad b=\begin{pmatrix}
     1&0&0\\ 2&1&0\\ 0&0& \I_{n-2}~
     \end{pmatrix},
\end{equation*}
where $\I_{n-2}$ is the identity matrix of dimension $n-2$. Then the group $$G:=\Gamma\ast_F \Gamma=\langle X,\bar{X}\mid R,\bar{R},a=\bar{a},b=\bar{b}\rangle$$ is not LEA.
\begin{proof}
The group $G$ is finitely presented. Hence it is sufficient to prove that it is not residually amenable.
Let 
$$ x=\begin{pmatrix}
     1&\frac{2}{p}&0\\ 0&1&0\\ 0&0&\I_{n-2}
     \end{pmatrix}
$$
and consider the element $g=[x,\bar{x}]\in G$.
Since $x\notin F$, using normal forms for the elements of the amalgamated free product \cite[I.11]{LS}, it follows that $g\neq e_G$. 

Let $\pi\colon G\twoheadrightarrow A$ be a surjective homomorphism with $A$ amenable. We claim that $\pi(g)=e_A$. 
Indeed, consider the restriction $\pi\restriction_\Gamma\colon\Gamma\to \pi(\Gamma)$. The group $\pi(\Gamma)\leq A$ is amenable and moreover it is a quotient
of $\Gamma$, which is a group with Kazhdan's property $(T)$. 
Hence $\pi(\Gamma)$ is finite and, in particular, $\pi(x)$ has finite order $o_x$.

The element $x$ is unipotent, so $\pi(x)$ is unipotent too.
As the group $\Gamma$ satisfies the congruence subgroup property, it follows that $\pi(x)$ is an element of some $\SL_n(\Z/q\Z)$ or $\PSL_n(\Z/q\Z)$, for $q$ coprime with $p$.
As $\pi(x)$ is unipotent, the order $o_x$ divides a power of $q$. Moreover $\gcd(p,q)=1$, so $\gcd(p,o_x)=1$.

As $x^p=a$, we have that $\langle \pi(a)\rangle\leq\langle \pi(x)\rangle$ and that
$$o_a=o_{x^p}=\frac{o_x}{\gcd(p, o_x)}=o_x.$$
This implies that the two finite groups $\langle \pi(a)\rangle$ and $\langle \pi(x)\rangle$ have the same cardinality, and so 
$\langle \pi(a)\rangle=\langle \pi(x)\rangle$.

The same argument applies to the elements $\bar{x}$ and $\bar{a}$, so $\langle \pi(\bar{a})\rangle=\langle \pi(\bar{x})\rangle$.
As in the group $G$ we have $a=\bar{a}$, it follows that $\langle\pi(x)\rangle=\langle\pi(\bar{x})\rangle$, and so $\pi(g)=[\pi(x),\pi(\bar{x})]=e_A$. That is, the element $g$ is mapped 
to the trivial element in all amenable quotients of $G$. Thus, $G$ is not residually amenable.
\end{proof}
\end{theorem}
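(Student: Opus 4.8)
The plan is to invoke the criterion that a finitely presented LEA group must be residually amenable \cite[Proposition 7.3.8]{CSC}. Since $G$ is finitely presented---being an amalgam of the two finitely presented groups $\Gamma$ and $\bar\Gamma$ over the finitely generated subgroup $F$---it suffices to show that $G$ is \emph{not} residually amenable. To that end I would exhibit a single nontrivial element $g\in G$ that is killed by every homomorphism from $G$ onto an amenable group; its image in no amenable quotient being nontrivial, $G$ cannot be residually amenable.

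For the element, I would search for some $x\in\Gamma$ that is unipotent, lies outside $F$, yet has a $p$-th power inside $F$. Concretely, the matrix with $(1,2)$-entry equal to $2/p$ works: one checks $x^p=a\in F$, while $x\notin F$ because its entry $2/p$ is not an integer and $F\leq\SL_n(\Z)$. I would then set $g:=[x,\bar{x}]$, the commutator of $x$ with its mirror copy $\bar{x}\in\bar\Gamma$. The first thing to verify is that $g\neq e_G$: since neither $x$ nor $\bar{x}$ lies in the amalgamated subgroup $F$, the word $x\bar{x}x^{-1}\bar{x}^{-1}$ is reduced in the amalgamated free product, so $g\neq e_G$ by the normal form theorem \cite[I.11]{LS}.

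The heart of the argument is to show $\pi(g)=e_A$ for every surjection $\pi\colon G\twoheadrightarrow A$ with $A$ amenable. First, restricting $\pi$ to $\Gamma$ yields an amenable quotient $\pi(\Gamma)$ of $\Gamma$; as $\Gamma$ has Kazhdan's property (T), this forces $\pi(\Gamma)$ to be finite, so $\pi(x)$ has finite order $o_x$. Next I would use the congruence subgroup property to see that this finite quotient factors through some $\SL_n(\Z/q\Z)$ or $\PSL_n(\Z/q\Z)$ with $\gcd(p,q)=1$; because $x$ is unipotent, $\pi(x)$ is unipotent, and the order of a unipotent element in such a group divides a power of $q$, whence $\gcd(p,o_x)=1$. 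This coprimality is exactly what makes the relation $x^p=a$ useful: raising to the $p$-th power is then a bijection of the cyclic group $\langle\pi(x)\rangle$, so $\langle\pi(a)\rangle=\langle\pi(x)^p\rangle=\langle\pi(x)\rangle$, and symmetrically $\langle\pi(\bar{a})\rangle=\langle\pi(\bar{x})\rangle$. Finally the amalgamating relation $a=\bar{a}$ gives $\pi(a)=\pi(\bar{a})$, so $\langle\pi(x)\rangle=\langle\pi(\bar{x})\rangle$ is one cyclic, hence abelian, subgroup; the two elements $\pi(x)$ and $\pi(\bar{x})$ therefore commute and $\pi(g)=e_A$.

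I expect the main obstacle to be the middle step, namely extracting $\gcd(p,o_x)=1$. It is precisely here that the three standing hypotheses on $\Gamma$ must be combined: property (T) to make every amenable quotient finite, the congruence subgroup property to pin down the isomorphism type of those finite quotients, and the unipotence of $x$ to conclude that its order avoids the prime $p$. Once this coprimality is in hand the remainder is the formal arithmetic of cyclic groups together with the amalgamation relation, and the conclusion---that $G$ is not residually amenable, hence not LEA---follows at once.
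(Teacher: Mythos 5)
Your proposal is correct and follows essentially the same route as the paper: finite presentability reduces non-LEA to non-residual-amenability, the element $g=[x,\bar{x}]$ is nontrivial by the normal form theorem, and property (T), the congruence subgroup property, and unipotence combine to give $\gcd(p,o_x)=1$, whence $\langle\pi(a)\rangle=\langle\pi(x)\rangle$ and $\pi(g)=e_A$. Your observation that $p$-th powering is a bijection of the cyclic group $\langle\pi(x)\rangle$ is a slightly cleaner phrasing of the paper's order computation, but the argument is the same.
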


In the next corollary we construct countably many pairwise non-isomorphic groups, not known to be sofic, satisfying Kaplansky's direct and stable finiteness conjectures.

\begin{corollary}\label{corKaNi}
With the notations of the previous theorem, for $r\geq 2$ let $F_r\leq\Gamma$ be generated by
$\{b^iab^{-i}\mid i=0,\dots,r-1\}$. Then the groups
\begin{equation*}
 G_r:=\Gamma\ast_{F_r}\Gamma=\langle X,\bar{X}\mid R,\bar R,a=\bar{a},\dots ,b^{r-1}ab^{-r+1}=\bar b^{r-1}\bar a\bar b^{-r+1}\rangle
\end{equation*}
are pairwise non-isomorphic and are not LEA. Moreover they are free-by-sofic, and hence $D[G_r]$ is directly finite (equivalently: stably finite) with respect to all division rings $D$, for all $r\geq2$.
\begin{proof}
The subgroup $F_r$ is a free group of rank $r$. The argument of the proof of Theorem~\ref{theoKaNi}
shows that the element $g=[x,\bar{x}]$ is mapped to the trivial element in all amenable quotients of $G_r$. Hence $G_r$ is not LEA.

By the universal property of amalgamated free products, we have the following commuting diagram
$$\xymatrix{
\Gamma \,\,\ar@{^{(}->}[r]\ar[ddr]_{\id} &G_r\ar@{.>}^{\exists ! \varphi}[dd] & \,\,\bar\Gamma\ar@{_{(}->}[l]\ar[ddl]^{\id}\\
&&\\
&\Gamma&
}$$
where $\varphi\colon G_r\twoheadrightarrow \Gamma$ is a surjective homomorphism.
Let $K=\ker\varphi$, then $K\cap \Gamma=K\cap \bar{\Gamma}=\{e\}$. This implies that $K$ acts freely on the Bass-Serre tree associated to
the amalgamated free product $G_r$, that is to say, $K$ is a free group. 

Hence $G_r$ is free-by-sofic and, given a division ring $D$, the group ring $D[G_r]$ is directly finite by our Main Theorem.
Note that $K$ is not finitely generated, so we cannot conclude that $N[G_r]$ is directly finite for noetherian rings~$N$.

It remains to prove that the family $\{G_r\}_{r\geq 2}$ consists of pairwise non-isomorphic finitely presented groups. To this aim, we recall the notion of deficiency of a finitely presented group.

The \emph{deficiency} $\df(G)$ of a finitely presented group $G$ is defined as
$\max\{\lvert X\rvert-\lvert R\rvert \}$ over all the finite presentations $G=\langle X\mid R\rangle$. It is invariant under isomorphism, and we have
$$\df(G_r)=2\cdot \df(\SL_m(\Z[1/p])-r.$$
Therefore, the groups $\{G_r\}_{r\geq 2}$ are pairwise non-isomorphic.

\end{proof}
\end{corollary}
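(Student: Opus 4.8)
The plan is to prove Corollary~\ref{corKaNi} in four independent parts, mirroring the four claims in its statement: that $F_r$ is free of rank $r$, that $G_r$ is not LEA, that $G_r$ is free-by-sofic (whence direct finiteness follows from the Main Theorem), and that the $G_r$ are pairwise non-isomorphic.

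First I would establish that $F_r=\langle b^iab^{-i}\mid i=0,\dots,r-1\rangle$ is a free group of rank $r$. The natural approach is to observe that $a$ and $b$ already generate a free group $F=\langle a,b\rangle$ of rank $2$ (the matrices $a,b$ are a standard ping-pong pair generating a free group), so any subgroup of $F$ is free by the Nielsen--Schreier theorem; to pin down the rank $r$ I would check that the conjugates $b^iab^{-i}$ form part of a free generating set, e.g.\ via a ping-pong argument or by exhibiting them as Schreier generators. For the non-LEA claim, the key point is that the proof of Theorem~\ref{theoKaNi} goes through verbatim: the element $g=[x,\bar x]$ is non-trivial in $G_r$ by normal forms (since $x\notin F_r$), and the congruence-subgroup-plus-property-(T) argument forces $\langle\pi(x)\rangle=\langle\pi(\bar x)\rangle$ in every amenable quotient, so $\pi(g)=e$; combined with finite presentability and \cite[Proposition 7.3.8]{CSC}, this shows $G_r$ is not LEA.

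Next I would prove that $G_r$ is free-by-sofic. The plan is to use the universal property of the amalgamated product to build the retraction $\varphi\colon G_r\twoheadrightarrow\Gamma$ that is the identity on each factor, set $K=\ker\varphi$, and argue that $K$ is free. The mechanism is Bass--Serre theory: $G_r=\Gamma\ast_{F_r}\Gamma$ acts on its Bass--Serre tree $T$ with vertex and edge stabilizers conjugate to $\Gamma$ or $F_r$; since $\varphi$ restricts to the identity on each copy of $\Gamma$ (hence is injective there), one gets $K\cap\Gamma=K\cap\bar\Gamma=\{e\}$, so $K$ meets every vertex stabilizer trivially and therefore acts \emph{freely} on $T$. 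A group acting freely on a tree is free, so $K$ is free and $G_r/K\cong\Gamma$ is sofic (being residually finite). The Main Theorem then yields direct and stable finiteness of $D[G_r]$ for division rings $D$.

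The hard part will be the final step: showing the $G_r$ are pairwise non-isomorphic. The strategy is to use the deficiency $\df(G)=\max\{\lvert X\rvert-\lvert R\rvert\}$, an isomorphism invariant of finitely presented groups. The given presentation of $G_r$ has $2\lvert X\rvert$ generators and $2\lvert R\rvert+r$ relators (the $r$ amalgamation relations $b^iab^{-i}=\bar b^i\bar a\bar b^{-i}$ for $i=0,\dots,r-1$), suggesting $\df(G_r)=2\,\df(\Gamma)-r$, which is strictly decreasing in $r$ and hence separates the groups. The genuine obstacle is that the displayed presentation only gives the \emph{lower} bound $\df(G_r)\geq 2\,\df(\Gamma)-r$; proving equality requires an upper bound on the deficiency, which does not follow formally from any single presentation. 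I would obtain this upper bound via the standard inequality $\df(G)\leq \mathrm{rk}_{\Q}H_1(G;\Q)-\mathrm{s}(H_2(G;\Q))$, or more cleanly via the fact that property~(T) forces $H_1(\Gamma;\Q)=0$ so that an easy Mayer--Vietoris computation for the amalgam $G_r=\Gamma\ast_{F_r}\Gamma$ pins down $b_1(G_r)$ and $b_2(G_r)$ and thereby controls the deficiency from above; matching the two bounds then gives the exact value and completes the proof.
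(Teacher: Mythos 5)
Your proposal follows essentially the same route as the paper on every claim: the non-LEA argument by re-running the proof of Theorem~\ref{theoKaNi} on $g=[x,\bar x]$, the retraction $\varphi\colon G_r\twoheadrightarrow\Gamma$ from the universal property with $K=\ker\varphi$ acting freely on the Bass--Serre tree (hence free, so $G_r$ is free-by-sofic), and deficiency as the invariant separating the $G_r$. The one place you genuinely diverge is the last step, and there you are more careful than the paper: the paper simply asserts $\df(G_r)=2\,\df(\Gamma)-r$, whereas you correctly observe that the displayed presentation only yields the lower bound and that an upper bound must come from homology. Your Mayer--Vietoris computation is the right tool, but note that it settles the matter even more directly than you suggest: since $H_1(\Gamma;\Q)=0$ by property (T) and $H_2(F_r;\Q)=0$, the sequence $0\to H_2(\Gamma;\Q)^2\to H_2(G_r;\Q)\to H_1(F_r;\Q)\to 0$ gives $b_2(G_r)=2b_2(\Gamma)+r$ exactly, and this Betti number alone distinguishes the groups; by contrast, your final step of ``matching the two bounds'' to get the exact deficiency need not close up, since the presentation-derived lower bound $2\,\df(\Gamma)-r$ and the Epstein-type upper bound $-b_2(G_r)$ coincide only if $\Gamma$ itself attains the homological bound, which is not obvious. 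One further caveat you share with the paper: the Main Theorem as stated requires a \emph{finitely generated} residually finite kernel, and $K$ is an infinitely generated free group, so the direct finiteness of $D[G_r]$ for division rings really rests on Virili's result for free-by-sofic groups rather than on the Main Theorem verbatim.
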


Note that the groups $G_r$ do not have property (T). This follows, for instance, from \cite[Remark 2.3.5 and Theorem 2.3.6]{BeHaVa}.

Our result extends further to HNN extensions. In \cite{Ber}, we have characterized the residual amenability of particular HNN extensions $A\ast_H$ and amalgamated free products $A\ast_H A$, in terms of the
amalgamated subgroup $H$ being closed in the proamenable topology of $A$ \cite[Corollaries 1.8 and 1.10]{Ber}.
Using these results and Corollary~\ref{corKaNi}, we obtain:

\begin{corollary}\label{cor2}
Let $p>2$ be a prime number, $n\geq 3$ and $\SL_n(\Z[1/p])=\langle X\mid R\rangle$. For $r\geq 2$ the groups
\begin{equation*}
\Gamma_r:=\langle X, t\mid R,\, tat^{-1}=a,\, t(bab^{-1})t^{-1}=bab^{-1},\dots,\,t(b^{r-1}ab^{-(r-1)})t^{-1}=b^{r-1}ab^{-(r-1)}\rangle 
\end{equation*}
are pairwise non-isomorphic and are not LEA. Moreover they are free-by-sofic, and hence $D[\Gamma_r]$ is directly finite (equivalently: stably finite) with respect to all division rings $D$, for all $r\geq 2$.
\end{corollary}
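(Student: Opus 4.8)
The plan is to mirror the structure of Corollary~\ref{corKaNi} as closely as possible, since the HNN extension $\Gamma_r$ is built from the same defining data (the group $\Gamma=\SL_n(\Z[1/p])$ with its property (T), congruence subgroup property, and the free subgroup $F_r=\langle b^iab^{-i}\mid i=0,\dots,r-1\rangle$). Three separate assertions must be established: that the $\Gamma_r$ are not LEA, that they are free-by-sofic (whence direct finiteness of $D[\Gamma_r]$ follows from the Main Theorem), and that they are pairwise non-isomorphic.

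For the \emph{not LEA} claim I would invoke \cite[Corollaries 1.8 and 1.10]{Ber}, as signalled in the text preceding the statement: residual amenability of an HNN extension $A\ast_H$ is equivalent to $H=F_r$ being closed in the proamenable topology of $A=\Gamma$. The work is therefore to show $F_r$ is \emph{not} proamenably closed, and here I expect to reuse the computation from the proof of Theorem~\ref{theoKaNi}. Concretely, for the unipotent element $x$ with entry $2/p$, the argument there shows that in every finite (equivalently, every amenable, by property (T)) quotient $\pi(\Gamma)$ the cyclic groups $\langle\pi(a)\rangle$ and $\langle\pi(x)\rangle$ coincide, because $o_a=o_x$ once $\gcd(p,o_x)=1$; this forces $x$ (and more generally the conjugates $b^ixb^{-i}$) to lie in the proamenable closure of $F_r$ even though $x\notin F_r$ by normal forms. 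Thus $F_r$ fails to be closed, so $\Gamma_r$ is not residually amenable, and being finitely presented it is not LEA by \cite[Proposition 7.3.8]{CSC}.

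For \emph{free-by-sofic} I would repeat the Bass--Serre argument verbatim in the HNN setting. The retraction sending $t\mapsto e$ and fixing $\Gamma$ extends, by the universal property of HNN extensions (the stable letter $t$ conjugates $F_r$ identically to $F_r$, so the relations are respected), to a surjection $\varphi\colon\Gamma_r\twoheadrightarrow\Gamma$. Its kernel $K=\ker\varphi$ meets the vertex group $\Gamma$ trivially, hence acts freely on the Bass--Serre tree of the HNN extension and is therefore free; the quotient $\Gamma_r/K\cong\Gamma$ is residually finite, hence sofic. So $\Gamma_r$ is free-by-sofic and the Main Theorem (via \cite[Corollary 5.5]{Vir} or Corollary A) yields that $D[\Gamma_r]$ is directly, equivalently stably, finite over every division ring $D$. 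As in Corollary~\ref{corKaNi} one notes $K$ is not finitely generated, so the noetherian-ring conclusion is unavailable.

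For \emph{pairwise non-isomorphism} the cleanest route is again deficiency: the given presentation of $\Gamma_r$ has the generators of $\Gamma$ plus one stable letter $t$, and the relators of $\Gamma$ plus $r$ stability relations, so $\df(\Gamma_r)=\df(\SL_n(\Z[1/p]))+1-r$ depends injectively on $r$, and deficiency is an isomorphism invariant. I expect the \emph{main obstacle} to be the proamenable-closure step: unlike the amalgamated case, where non-triviality of $g=[x,\bar x]$ came directly from amalgam normal forms, here one must translate the order computation of Theorem~\ref{theoKaNi} into the precise topological statement that $F_r$ is not closed in the proamenable topology, and verify that the criterion of \cite{Ber} applies to the $r$-generated free subgroup $F_r$ rather than only to a cyclic one. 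The remaining steps are routine adaptations of Corollary~\ref{corKaNi}.
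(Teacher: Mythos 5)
Your proposal is correct and follows essentially the same route as the paper, which gives no explicit proof here but simply combines \cite[Corollaries 1.8 and 1.10]{Ber} with Corollary~\ref{corKaNi}: the order computation of Theorem~\ref{theoKaNi} shows $F_r$ is not closed in the proamenable topology of $\Gamma$, and the Bass--Serre and deficiency arguments transfer verbatim to the HNN setting exactly as you describe.
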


We end with the following question:
\begin{question}
Are the groups $G_r$ and $\Gamma_r$ sofic/hyperlinear? 
\end{question}

\end{document}